\documentclass[12pt]{article}

\usepackage{amsmath}
\usepackage{mathtools}
\usepackage{amssymb}
\usepackage{enumerate}
\usepackage{here}
\usepackage{fullpage}
\usepackage[usenames,dvipsnames,svgnames,table]{xcolor}
\usepackage{mathrsfs}
\usepackage{multirow}
%\PassOptionsToPackage{natbib}{compress}

\usepackage{cite}
\def\citep{\cite}

\usepackage{dsfont}

\definecolor{lblue}{RGB}{0,110,152}

\definecolor{dred}{RGB}{171,67,53}

\newtheorem{theorem}{Theorem}%[\arabic{section}]

\newtheorem{proposition}[theorem]{Proposition}

\newtheorem{define}[theorem]{Definition}

\newtheorem{problem}[theorem]{Problem}

\newcommand{\mendth}{\hfill \ensuremath{\vartriangle}}

\DeclareMathOperator*{\col}{col}

\DeclareMathOperator*{\diag}{diag}

\DeclareMathOperator{\E}{ \mathbb{E}}
\DeclareMathOperator{\p}{ \mathbb{P}}

\newenvironment{proof}{{\it Proof :~}}{\hfill$\diamondsuit$\\}

\begin{document}

\title{A class of $L_1$-to-$L_1$ and $L_\infty$-to-$L_\infty$ interval observers for (delayed) Markov jump linear systems}

\author{Corentin Briat\thanks{corentin@briat.info; http://www.briat.info}}
%\author[Second]{ and Alexandre Seuret}\ead{Alexandre.Seuret@gipsa-lab.grenoble-inp.fr}\ead[url]{http://www.gipsa-lab.fr/\textasciitilde alexandre.seuret}

%\address{Swiss Federal Institute of Technology--Z\"{u}rich (ETH-Z), Department of Biosystems Science and Engineering (D-BSSE), Switzerland.\\ \today}

\date{}

\maketitle

\begin{abstract}
\noindent We exploit recent results on the stability and performance analysis of positive Markov jump linear systems (MJLS) for the design of interval observers for MJLS with and without delays. While the conditions for the $L_1$ performance are necessary and sufficient, those for the $L_\infty$ performance are only sufficient. All the conditions are stated as linear programs that can be solved very efficiently. Two examples are given for illustration.\\

\noindent\textit{Keywords.} Interval observation; Markov jump linear systems; Positive systems; Optimization
\end{abstract}

\section{Introduction}

Interval observers are a particular type of observers that aim at estimating upper and lower bounds on the state value at all times. They have been successfully designed for a wide variety of systems including systems with inputs \citep{Mazenc:11,Briat:15g}, linear systems \citep{Mazenc:12}, delay systems \citep{Efimov:13c,Briat:15g},  LPV systems \citep{Efimov:13b,Chebotarev:15}, discrete-time systems \citep{Mazenc:13,Briat:15g}, impulsive systems \citep{Degue:16nolcos,Briat:17ifacObs,Briat:18_ImtImp} and switched systems \cite{Rabehi:17,Ethabet:17,Briat:18_ImtImp}. To the best of the author's knowledge, no results have been obtained in the context of Markov jump linear systems albeit those systems are important for practical purposes. Those systems are a class of switched systems having the particularity that the switching rule is governed by a continuous-time Markov process with countable finite \cite{Costa:05,Boukas:06} or infinite \cite{Todorov:08} state-space. The positive version of those systems have been studied in considered in \cite{Aitrami:09b,Zhang:14,Bolzern:15} whereas those subject to delays have been considered in \cite{Zhu:17b} where various necessary and sufficient conditions for their stability and performance analysis have been obtained. The importance of positive systems \cite{Farina:00} is that they are instrumental for solving the interval observation problem and that they benefit from very interesting theoretical properties, such as the existence of various necessary and sufficient conditions for their stability and performance characterizations; see e.g. \cite{Farina:00,Briat:11h,Rantzer:15b,Ebihara:11,Colombino:15}.

The goal of this paper is to use state-of-the-art methods for the analysis of positive Markov jump linear systems (MJLS) for the design of interval observers for both MJLS with and without delays. We provide necessary and sufficient conditions for the design of a certain class of interval observers for Markov jump linear systems with delays. Interestingly, the observer can be designed in a way that minimizes the $L_1$-gain on the transfer from the disturbance to the estimation error. The obtained conditions can be checked using linear programming techniques that also allows for the consideration of structural constraints (bounds on the coefficients, zero pattern, etc) on the gains of the observers. Analogous conditions, albeit sufficient only, are provided in the context of the $L_\infty$-gain. Some examples are given for illustration.

\textit{Outline:} The structure of the paper is as follows: in Section \ref{sec:preliminary} preliminary definitions and results are given. Section \ref{sec:perf} is devoted to the performance analysis of positive MJLS. Section \ref{sec:obs} presents the main results of the paper on interval observation. Examples are given in Section \ref{sec:ex}.

\textit{Notations:} The cone of positive and nonnegative vectors of dimension $n$ are denoted by $\mathbb{R}_{>0}^n$ and $\mathbb{R}_{\ge0}^n$, respectively. The notation $\col(x_1,\ldots,x_n)$ denotes the column vector made by stacking the elements $x_1$ to $x_n$ on the top of each other. $\mathds{1}$ denotes the vector of ones.

\section{Preliminaries}\label{sec:preliminary}

%\subsection{System definition}

Let us consider the following class of positive MJLS:
\begin{equation}\label{eq:mainsyst}
\begin{array}{rcl}
  \dot{x}(t)&=&A_{r_t}x(t)+A_{h,r_t}x(t-h)+E_{r_t}w(t)\\
  z(t)&=&C_{r_t}x(t)+C_{h,r_t}x(t-h)+F_{r_t}w(t)\\
  x(t_0)&=&x_0
\end{array}
\end{equation}
where $x,x_0\in\mathbb{R}_{\ge0}^n$, $u\in\mathbb{R}^{n_u}$, $w\in\mathbb{R}_{\ge0}^{n_w}$ and $z\in\mathbb{R}_{\ge0}^{n_z}$ are the state of the system, the initial condition, the control input, the exogenous input and the performance output, respectively. The disturbance signal $w$ can be either deterministic or stochastic (but independent of $x$ and $r$). This will be further explained when necessary. The stochastic switching signal $r_t\in\{1,\ldots,N\}$ is assumed to be governed by a continuous-time Markov process with discrete state-space. Let $P(\tau)$ defined as $[P(\tau)]_{ij}=p_{ij}(\tau):=\p[r_{s+\tau}=j|r_{s}=i]$. It is known that this matrix solves the forward Kolmogorov equation
\begin{equation}\label{eq:probability}
  \dot{P}(\tau)= P(\tau)\Pi,\ P(0)=I_N
\end{equation}
where the matrix $\Pi$ is Metzler and such that $\Pi\mathds{1}_N=0$.

%\begin{proposition}
%   For any $x_0\in\mathbb{R}_{\ge0}$, the state of the system \eqref{eq:mainsyst} with $u,w\equiv0$ is confined in the nonnegative orthant if and only if the matrices $A_i$, $i=1,\ldots,N$, are Metzler.
%\end{proposition}

\begin{proposition}
  The system \eqref{eq:mainsyst} is internally positive, i.e. for all $w(t)\ge0$, then we have that $x(t),z(t)\ge0$, if and only if the matrices $A_i$ are Metzler and the matrices $A_{h,i},E_i,C_i,C_{h,i}$ and $F_i$ are nonnegative for all $i=1,\ldots,N$.
  \end{proposition}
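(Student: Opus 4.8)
The plan is to argue both directions by reducing, along each sample path of the Markov process, to the classical characterization of positivity for linear time-delay systems. Fix a realization of $r_t$; since $r_t$ is piecewise constant it has isolated switching instants $t_0<t_1<t_2<\cdots$, and on each interval $[t_k,t_{k+1})$ the mode equals some fixed index $i_k$, so that \eqref{eq:mainsyst} reduces to the constant-coefficient delay system $\dot x=A_{i_k}x+A_{h,i_k}x(\cdot-h)+E_{i_k}w$. The state is continuous across the switching instants (there are no jumps in $x$), hence it suffices to propagate nonnegativity interval by interval.

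For sufficiency, I would use the variation-of-constants formula on each interval,
$$x(t)=e^{A_{i_k}(t-t_k)}x(t_k)+\int_{t_k}^t e^{A_{i_k}(t-s)}\bigl(A_{h,i_k}x(s-h)+E_{i_k}w(s)\bigr)\,ds.$$
Because $A_{i_k}$ is Metzler, the matrix exponential $e^{A_{i_k}\tau}$ is nonnegative for $\tau\ge0$; together with $A_{h,i_k},E_{i_k}\ge0$ and $w\ge0$, the integrand is nonnegative as soon as the delayed state $x(s-h)$ is nonnegative. Running a method-of-steps induction on the grid obtained by merging the switching instants with the points $t_0+\ell h$, and using that on the first step $x(s-h)$ is the prescribed nonnegative history, I would conclude $x(t)\ge0$ for all $t\ge t_0$. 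Nonnegativity of $z$ then follows pointwise from $C_{i}x(t)+C_{h,i}x(t-h)+F_iw(t)\ge0$ since $C_i,C_{h,i},F_i\ge0$.

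For necessity, I would isolate each matrix by a judicious choice of initial data at a single instant, conditioning on the positive-probability event that $r_{t_0}=i$ and the chain dwells in mode $i$ on a short interval $[t_0,t_0+\delta)$. Writing $e_j$ for the $j$-th unit vector and taking $w\equiv0$, history $\phi$ with $\phi(t_0-h)=0$ and $x(t_0)=e_j$ places the state on the facet $\{x_k=0\}$ for $k\neq j$, so invariance of $\mathbb{R}_{\ge0}^n$ forces $(A_i)_{kj}=\bigl(\dot x(t_0^+)\bigr)_k\ge0$ for all $k\neq j$, i.e. $A_i$ is Metzler. Choosing instead $x(t_0)=0$ with $\phi(t_0-h)=e_j$ (respectively $w(t_0)=e_j$) and all other data zero gives $\dot x(t_0^+)=A_{h,i}e_j$ (respectively $E_ie_j$), which must be nonnegative since $x(t_0)=0$ lies on every facet; hence $A_{h,i},E_i\ge0$. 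The output matrices are handled algebraically: selecting $x(t_0),\phi(t_0-h),w(t_0)$ equal to $e_j$ one at a time and the others zero forces the $j$-th columns of $C_i$, $C_{h,i}$ and $F_i$ to be nonnegative.

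The main obstacle I anticipate is bookkeeping rather than conceptual: in the sufficiency direction I must handle the interleaving of the switching instants with the delay steps so that the induction is well founded (the dwell times may be shorter than $h$), and I must make sure the argument is carried out along almost every sample path so that pathwise nonnegativity yields the claimed internal positivity. In the necessity direction the only delicate point is justifying that each mode $i$ is visited with positive probability for a positive duration --- which holds because the generator $\Pi$ is Metzler and the holding time in any state is exponentially distributed --- so that the deterministic sign conditions are indeed forced separately for every $i$.
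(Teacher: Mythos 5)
The paper states this proposition without any proof at all---it is presented as a known fact (it is the standard positivity characterization for delay systems, applied mode-wise)---so there is nothing in the paper to compare your argument against line by line. Your pathwise reduction is the natural way to supply the missing proof, and it is essentially correct: almost every realization of $r_t$ has finitely many switches on compact intervals, the state is continuous across switches, and the combination of variation of constants (nonnegativity of $e^{A_i\tau}$ for Metzler $A_i$) with a method-of-steps induction on the merged grid of switching instants and delay multiples gives sufficiency; the facet/tangency argument at $t_0$ with data concentrated on $x(t_0)$, $\phi(t_0-h)$ or $w(t_0)$ one at a time gives necessity. Two small points deserve care. First, the induction hypothesis needs the whole nonnegative history segment $x_{[t-h,t]}\ge 0$, not just the value $x(t_k)$, but your merged grid handles this. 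Second, and more substantively, your justification that every mode $i$ is reached with positive probability is not right as stated: every generator $\Pi$ of a continuous-time Markov chain is Metzler with $\Pi\mathds{1}=0$, so Metzlerness tells you nothing about reachability, and if the chain is reducible or the initial distribution does not charge state $i$, the necessity of the conditions on $(A_i,A_{h,i},E_i,C_i,C_{h,i},F_i)$ for that mode genuinely fails. You should either assume $\p[r_{t_0}=i]>0$ for every $i$ (or irreducibility of $\Pi$), or restrict the necessity claim to the modes visited with positive probability; the paper implicitly makes such an assumption.
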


We now define the moment system associated with \eqref{eq:mainsyst} that will play an important role in the rest of the paper:
\begin{define}[Moment system \cite{Zhu:17b}]
  Let $x_i(t):=\E[x(t)\mathds{1}_{r_t=i}]$, $z_i(t):=\E[z(t)\mathds{1}_{r_t=i}]$ and $w_i(t):=\E[w(t)\mathds{1}_{r_t=i}]$. Then, the moment system associated with \eqref{eq:mainsyst} is defined as
  \begin{equation}\label{eq:congruent}
    \begin{array}{lcl}
      \dot{\bar x}(t)&=&\bar A \bar x(t)+\bar A_h \bar x(t-h)+\bar E \bar w(t)\\
      \bar z(t)&=&\bar C \bar x(t)+\bar C_h \bar x(t-h)+\bar F \bar w(t)
    \end{array}
  \end{equation}
  where $\bar x:=\col_i(x_i)\in\mathbb{R}^{Nn}_{\ge0}$, $\bar w:=\col_i(z_i)\in\mathbb{R}^{n_w}_{\ge0}$, $\bar z:=\col_i(z_i)\in\mathbb{R}^{Nn_z}_{\ge0}$ and
  \begin{equation}
    \begin{array}{rclrcl}
      \bar A&:=&\diag_i(A_i)+\Pi^T\otimes I_n,&\bar A_h&:=&\diag_i(A_{h,i})(P(h)^T\otimes I_n)\\
       \bar C&:=&\diag_i\{C_i\},&  \bar C_h&:=&\diag_i\{C_i\}(P(h)^T\otimes I_n)\\
        \bar E&:=&\diag_i\{E_i\},& \bar F&:=&\diag_i\{F_i\}.
    \end{array}
  \end{equation}
  The transfer function of this system is given by
  \begin{equation}
    \bar G(s):=(\bar C+\bar C_h)(sI-\bar A-\bar A_h)^{-1}\bar E+\bar F.
  \end{equation}
\end{define}
This reformulation is different from the one in \cite{Bolzern:15} where conditional moments are considered. The above formulation has the advantage that it does not depend on the value of the probability distribution of the Markov process when deterministic disturbances are considered.
%
%
%\subsection{Stability notions}
%
%Several notions of stability can be considered for stochastic processes. In this paper, we will consider the following two concepts of stochastic stability.
%\begin{define}\label{def:L1s}
%  The system \eqref{eq:mainsyst} is stochastically stable in the $L_1$-sense, if for all $x(0)\in\mathbb{R}^n_{\ge0}$ and all $\pi(0)$ we have that
%  \begin{equation}\label{eq:SS}
%    \lim_{t\to\infty}\mathbb{E}\left[\left.\int_0^t||x(s)||_\eta ds \right|  x(0),r_0\right]<\infty.
%  \end{equation}
%\end{define}
%As well known \cite{Mariton:90}, stochastic stability in the $L_2$-sense is equivalent to mean-square stability, i.e. $\lim_{t\to\infty}\E[||x(t)||_2^2]=0$. Analogously, it has been proved in \cite{Aitrami:09b,Bolzern:14,Bolzern:15} that stochastic stability in the $L_1$-sense for positive systems is equivalent to mean stability, i.e. $\lim_{t\to\infty}\E[||x(t)||_1]=0$. When the input is deterministic, the expectation can be dropped in the right hand-side.

\section{Stochastic stability and performance of (delayed) positive Markov jump linear systems}\label{sec:perf}

\subsection{Stochastic $L_1$ performance of delayed positive Markov jump linear systems}

Let us first define the stochastic $L_1$-gain:
\begin{define}
  The $L_1$-gain of the system \eqref{eq:mainsyst} is defined as the smallest $\xi>0$ such that
  \begin{equation}
    \int_0^\infty\mathds{1}_{q}^T \E\left[z(s)\right]ds\le\xi\int_0^\infty\mathds{1}_{p}^T\E\left[w(s)\right]ds
  \end{equation}
  holds for all $w\in L_1$, $w\ge0$. When the input $w$ is deterministic, the expectation symbol can be removed in the right hand-side.
\end{define}

We then have the following result:
\begin{theorem}[\cite{Zhu:17b}]\label{th:L1}
We assume here that the system  \eqref{eq:mainsyst} is positive and that $w\in L_1$. Then, the following statements are equivalent:
\begin{enumerate}[(a)]
    \item  The system \eqref{eq:mainsyst} with is stochastically stable in the $L_1$-sense and the $L_1$-gain of the transfer $w\mapsto z$ is equal to $\gamma^*>0$.
    \item  The $L_1$-gain $\gamma^*>0$ of the system \eqref{eq:mainsyst} is the optimal value of the linear program
    \begin{equation}
        \gamma^*=\inf_{\gamma>0,\lambda_1>0,\ldots,\lambda_N>0}\gamma
        \end{equation}
        such that $\lambda_i\in\mathbb{R}_{>0}^n$, $i=1,\ldots,N$, and
        \begin{equation}\label{eq:L1cond1}
          \begin{array}{rcl}
            \hspace{-10mm}A_i^T\lambda_i+\sum_{j=1}^N\left(\pi_{ij}I_n+p_{ij}(h)A_{h,j}^T\right)\lambda_j\\
            \qquad \quad+\sum_{j=1}^N\left(p_{ij}(h)C_{h,j}^T+C_i^T\right) \mathds{1}_{n_z}&<&0\\
            E_i^T\lambda_i-\gamma \mathds{1}_{n_w}+F_i^T\mathds{1}_{n_z}&<&0
          \end{array}
        \end{equation}
        for all $i=1,\ldots,N$.
  \item The $L_1$-gain $\gamma^*>0$ of the system \eqref{eq:mainsyst} verifies the expression $  \gamma^*=||\bar G(0)||_1$.  \hfill\mendth
\end{enumerate}
\end{theorem}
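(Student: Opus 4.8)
The plan is to convert each of the three statements into a property of the deterministic positive time-delay moment system \eqref{eq:congruent} and then invoke the $L_1$ theory of positive (time-delay) systems. First I would record that \eqref{eq:congruent} is itself a positive time-delay system: since $\Pi$ is Metzler, $\bar A=\diag_i(A_i)+\Pi^T\otimes I_n$ is Metzler whenever each $A_i$ is, and $\bar A_h,\bar E,\bar C,\bar C_h,\bar F$ are nonnegative by the positivity proposition above together with $P(h)\ge0$. Next I would show that the stochastic $L_1$-gain of \eqref{eq:mainsyst} equals the ordinary $L_1$-gain of \eqref{eq:congruent}: using $\sum_{i=1}^N\mathds{1}_{r_t=i}=1$, summation of the blocks of $\bar z$ and $\bar w$ gives $\mathds{1}_{Nn_z}^T\bar z(t)=\mathds{1}_{n_z}^T\E[z(t)]$ and $\mathds{1}_{Nn_w}^T\bar w(t)=\mathds{1}_{n_w}^T\E[w(t)]$, so the two gain ratios coincide. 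This reduces statement (a) to stability and $L_1$-gain $\gamma^*$ of the moment system.

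For (a) $\Leftrightarrow$ (b) I would use a linear copositive Lyapunov--Krasovskii functional $V(t)=\lambda^T\bar x(t)+\int_{t-h}^t\mu^T\bar x(s)\,ds$ with $\lambda=\col_i(\lambda_i)>0$ and $\mu=\bar A_h^T\lambda\ge0$, the latter chosen precisely so that the delayed state term cancels, leaving $\dot V(t)=\lambda^T(\bar A+\bar A_h)\bar x(t)+\lambda^T\bar E\bar w(t)$. Imposing the dissipation inequality $\dot V(t)+\mathds{1}^T\bar z(t)-\gamma\,\mathds{1}^T\bar w(t)<0$, and using that under integration the delayed output contributes $\int_0^\infty\mathds{1}^T\bar C_h\bar x(t-h)\,dt=\int_0^\infty\mathds{1}^T\bar C_h\bar x(t)\,dt$ (zero data on $[-h,0]$), the requirement splits, because $\bar x,\bar w\ge0$, into $(\bar A+\bar A_h)^T\lambda+(\bar C+\bar C_h)^T\mathds{1}<0$ and $\bar E^T\lambda+\bar F^T\mathds{1}-\gamma\mathds{1}<0$; writing out the blocks reproduces exactly \eqref{eq:L1cond1}. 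Integrating from $0$ to $\infty$ with $V\ge0$ then yields both stochastic stability and $\gamma^*\le\gamma$, giving (b) $\Rightarrow$ (a); the converse relies on the converse-Lyapunov property for positive time-delay systems discussed below.

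The equivalence with (c) is then largely algebraic. Delay-independent stability of positive time-delay systems makes $\bar A+\bar A_h$ Metzler and Hurwitz exactly when (a) holds, so $-(\bar A+\bar A_h)^{-1}\ge0$ and $\bar G(0)=(\bar C+\bar C_h)(-(\bar A+\bar A_h))^{-1}\bar E+\bar F\ge0$ is well defined. Saturating the first inequality of \eqref{eq:L1cond1} at equality fixes the smallest feasible multiplier $\lambda_0=-(\bar A+\bar A_h)^{-T}(\bar C+\bar C_h)^T\mathds{1}$; substituting into the second inequality and simplifying gives $\bar E^T\lambda_0+\bar F^T\mathds{1}=\bar G(0)^T\mathds{1}$, so the least admissible $\gamma$ is $\max_k\,\mathds{1}^T\bar G(0)e_k=||\bar G(0)||_1$. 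Since every feasible $\lambda$ dominates $\lambda_0$ and $\bar E\ge0$, this value is the infimum of the linear program, which is (b) $\Leftrightarrow$ (c). The identification of this common value with the true gain, (a) $\Leftrightarrow$ (c), is the standard fact that for a positive system the induced $L_1\to L_1$ gain equals the DC gain $||\bar G(0)||_1$, the worst-case nonnegative input placing all its mass in the channel maximizing the steady-state column sum.

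The main obstacle is the necessity/tightness hidden in (a) $\Rightarrow$ (c): the Lyapunov functional gives the easy bound $\gamma^*\le||\bar G(0)||_1$, but proving the bound is attained requires exhibiting (approximately) worst-case inputs and, in the stochastic setting, checking that the structured moment-system input induced by a deterministic disturbance $w$ --- whose blocks are $\bar w_i=\p[r_t=i]\,w$ --- still realizes the full column-maximum of $\bar G(0)$ in the limit, independently of the distribution. Equally delicate is the converse-Lyapunov step that supplies a strictly positive $\lambda$ from stability alone, which I would obtain from the Metzler--Hurwitz structure of $\bar A+\bar A_h$ (guaranteeing solvability of the first inequality over the positive orthant) rather than from a generic Lyapunov converse.
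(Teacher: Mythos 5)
The paper does not prove this theorem: it is imported verbatim from the cited reference \cite{Zhu:17b} and stated without proof, so there is no in-paper argument to compare against. Judged on its own terms, your sketch follows exactly the route that the cited literature takes: pass to the positive deterministic moment system \eqref{eq:congruent}, observe that summing the blocks of $\bar z$ and $\bar w$ identifies the stochastic $L_1$-gain with the deterministic $L_1$-gain of that system, and then run the linear copositive Lyapunov--Krasovskii argument with $V(t)=\lambda^T\bar x(t)+\int_{t-h}^t\mu^T\bar x(s)\,ds$. Your block computations are right: $(\Pi^T\otimes I_n)^T\lambda$ produces the $\sum_j\pi_{ij}\lambda_j$ term, $\bar A_h^T\lambda$ produces $\sum_j p_{ij}(h)A_{h,j}^T\lambda_j$, and the saturation argument $\lambda\ge\lambda_0=-(\bar A+\bar A_h)^{-T}(\bar C+\bar C_h)^T\mathds{1}$ correctly yields $\inf\gamma=\|\bar G(0)\|_1$ as the maximal column sum, giving (b) $\Leftrightarrow$ (c). (Minor looseness: the delayed output term $\bar C_h\bar x(t-h)$ is cleaner to absorb by taking $\mu=\bar A_h^T\lambda+\bar C_h^T\mathds{1}$ in the functional rather than by the integration identity you invoke, which silently assumes zero initial data and integrability that you have not yet established at that point of the argument.)

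The genuine gap is the one you yourself flag in the last paragraph and then leave open: the attainment of the bound, i.e., the direction (a) $\Rightarrow$ (b)/(c). The inputs that the moment system actually sees are structured, $\bar w_i(t)=\p[r_t=i]\,\E[w(t)]$ for $w$ independent of $r$, which is a strict subset of all nonnegative $L_1$ inputs to \eqref{eq:congruent}; the worst-case column of $\bar G(0)$ corresponds to a specific (mode, channel) pair, and realizing it requires concentrating input mass while the chain occupies that mode, which a disturbance independent of $r$ cannot do unless one is also free to choose the initial distribution of the chain (e.g., a Dirac at the worst mode) and to localize the input near $t=0$. Without an explicit construction of such (approximately) worst-case input/initial-distribution pairs, you only obtain $\gamma^*\le\|\bar G(0)\|_1$, not equality, so the claimed equivalence of (a) with (b) and (c) is not established. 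Likewise the converse-Lyapunov step (stability of the moment system implies feasibility of the strict LP) is asserted via the Metzler--Hurwitz structure but not carried out. These are precisely the parts that make the theorem a theorem rather than a sufficient condition, and they remain to be supplied (or delegated, as the paper does, to \cite{Zhu:17b}).
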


\subsection{Stochastic $L_\infty$ performance of positive Markov jump linear systems}

Let us consider now the computation of the $L_\infty$-gain which is given by:
\begin{define}
  The stochastic $L_\infty$-gain of the system \eqref{eq:mainsyst} is defined as the smallest $\xi>0$ such that
  \begin{equation}
    \sup_{t\ge0}\{||\E[z(t)]||_\infty\}]\le\xi\sup_{t\ge0}\{||\E[w(t)]||_\infty\}
  \end{equation}
    holds for all $w\in L_\infty$, $w\ge0$.
\end{define}

We then have the following result:
\begin{theorem}[\cite{Bolzern:15}]\label{th:Linf}
Assume that the non-delayed version of the system \eqref{eq:mainsyst} (i.e. $A_{h,i}=0$ for all $i=1,\ldots,N$)) is positive and that $w\in L_\infty$ is a stochastic signal that is independent of $(x,r)$. Assume further that one of the following equivalent statements hold:
\begin{enumerate}[(a)]
\item The $L_\infty$-gain of the moment system \eqref{eq:congruent} is equal to $\gamma^*$.
\item The $L_\infty$-gain $\gamma^*>0$ of the moment system \eqref{eq:congruent}  is the optimal value of the linear program
    \begin{equation}
        \gamma^*=\inf_{\gamma>0,\lambda_1>0,\ldots,\lambda_N>0}\gamma
        \end{equation}
        such that $\lambda_i\in\mathbb{R}_{>0}^n$, $i=1,\ldots,N$, and
    \begin{equation}
      \begin{array}{rcl}
        A_i\lambda_i+\sum_{j=1}^N\pi_{ji}\lambda_j+E_i \mathds{1}_{n_w}&<&0\\
        C_i\lambda_i-\gamma \mathds{1}_{n_z}+F_i\mathds{1}_{n_w}&<&0
      \end{array}
    \end{equation}
    hold for all $i=1,\ldots,N$.
    \item The $L_\infty$-gain $\gamma^*>0$ of the moment system \eqref{eq:congruent} verifies the expression $\gamma^*=||\bar G(0)||_\infty$.
\end{enumerate}
Then, the $L_\infty$-gain of the system \eqref{eq:mainsyst} with $A_{h,i}=0$, $i=1,\ldots,N$, is at most $\gamma$.
\end{theorem}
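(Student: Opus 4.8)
The plan is to treat the statement in two blocks: first the equivalence of (a), (b) and (c), which are the positive-systems $L_\infty$-gain certificates specialized to the moment system \eqref{eq:congruent}, and then the transfer of the resulting bound from the moment system back to the Markov jump system itself.

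For the equivalences I would first record that \eqref{eq:congruent} is a positive LTI system: $\bar A=\diag_i(A_i)+\Pi^\T\otimes I_n$ is Metzler (a block-diagonal Metzler matrix plus $\Pi^\T\otimes I_n$, which is Metzler because $\Pi$ is), and $\bar E,\bar C,\bar F$ are nonnegative. I would then prove (b)$\Rightarrow$(a): collecting $\bar\lambda:=\col_i(\lambda_i)>0$, the first inequality reads $\bar A\bar\lambda+\bar E\mathds{1}<0$ and forces $\bar A$ to be Hurwitz, so that $-\bar A^{-1}\ge0$; the set $\{\bar x:0\le\bar x\le\bar\lambda\}$ is then positively invariant and attractive under every input with $\|\bar w\|_\infty\le1$, and the second inequality $\bar C\bar\lambda+\bar F\mathds{1}<\gamma\mathds{1}$ bounds the output on this set, giving peak-to-peak gain $\le\gamma$; infimizing returns $\gamma^*$. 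For (a)$\Leftrightarrow$(c) I would invoke the standard fact that the $L_\infty$-to-$L_\infty$ gain of a stable positive LTI system equals the $\infty$-norm of its static gain (the worst case is a constant unit input driving the state to $-\bar A^{-1}\bar E\mathds{1}$), so the moment gain equals $\|\bar G(0)\|_\infty$. Finally (b)$\Leftrightarrow$(c) follows by evaluating the linear program at its tight vertex: the infimizing $\bar\lambda$ tends to $(-\bar A)^{-1}\bar E\mathds{1}$ and the infimizing $\gamma$ to $\|(\bar C(-\bar A)^{-1}\bar E+\bar F)\mathds{1}\|_\infty=\|\bar G(0)\|_\infty$.

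For the transfer of the bound I would use the independence of $w$ from $(x,r)$ to write $w_i(t)=p_i(t)\,\E[w(t)]$, so that the moment input is the structured signal $\bar w=\col_i(p_i\E[w])$ while the quantities entering the MJLS gain are $\E[z]=\sum_i z_i=(\mathds{1}_N^\T\otimes I_{n_z})\bar z$ and $\E[w]=\sum_i w_i$. Since the moment dynamics are cooperative and, for a constant input and zero initial state, the positive state cannot overshoot its equilibrium, I would reduce the peak of $\E[z]$ to the steady-state regime with a constant nonnegative $\E[w]=v$ and the chain at its stationary law $p=\pi^s$, yielding $\E[z]_\infty=Hv$ for an explicit nonnegative matrix $H$; the claim then becomes $\|H\|_\infty\le\gamma$. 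Weighting the second LP inequality by $\pi^s$ and using $\sum_i\pi^s_i=1$ reduces this, in turn, to the aggregate inequality $\sum_i C_i x_{i,\infty}\le\sum_i\pi^s_i C_i\lambda_i$, where $x_{i,\infty}$ are the steady-state moments.

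The main obstacle is exactly this aggregate inequality. The naive per-mode box bound $x_i(t)\le\lambda_i$, although valid for all $t$, is too weak: summed over the $N$ modes it inflates the gain by a factor $N$. What rescues the single factor $\gamma$ is probability conservation: writing $-\bar A\big(\col_i(\pi^s_i\lambda_i)-\bar x_\infty\big)=\bar r$ with $\bar r_i=\sum_j\pi_{ji}\lambda_j(\pi^s_i-\pi^s_j)$, the residual is governed by the generator through the stationarity relation $\Pi^\T\pi^s=0$ (equivalently $(\mathds{1}_N\otimes\mathds{1}_n)^\T(\Pi^\T\otimes I_n)=0$ together with $\sum_i\pi^s_i=1$), and after aggregation by the nonnegative $C_i$ the indefinite cross terms collapse to a nonnegative variance-type quantity (in the two-mode case the slack is exactly $(a-b)^2\ge0$). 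Establishing this cancellation in general, and separately handling transients and arbitrary chain initializations by a comparison/envelope argument on $\bar w(t)$, is the crux; it is also precisely why the $L_\infty$ result is only sufficient, since $\gamma=\|\bar G(0)\|_\infty$ genuinely over-bounds the true MJLS gain.
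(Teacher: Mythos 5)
The paper itself gives no proof of this theorem: it is quoted as a known result from the cited monograph of Bolzern and Colaneri, so there is no internal argument to compare yours against. Judged on its own terms, your first block is essentially correct and standard: with $\bar A_h=0$ the moment system \eqref{eq:congruent} is a positive LTI system, the LP in (b) is the blockwise transcription of $\bar A\bar\lambda+\bar E\mathds{1}<0$ and $\bar C\bar\lambda+\bar F\mathds{1}<\gamma\mathds{1}$ with $\bar\lambda=\col_i(\lambda_i)$ (the term $\sum_j\pi_{ji}\lambda_j$ being the $i$-th block of $(\Pi^T\otimes I_n)\bar\lambda$), and the equivalence of (a), (b), (c) is the usual $L_\infty$-gain characterization for such systems via Hurwitz-Metzler $\bar A$, invariance of the box $[0,\bar\lambda]$, and the static gain $\|\bar G(0)\|_\infty$.

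The genuine gap is in your second block, and you name it yourself without closing it. What must be shown is $\sup_t\|\sum_i z_i(t)\|_\infty\le\gamma\sup_t\|\E[w(t)]\|_\infty$, and as you correctly observe the per-mode bound $x_i(t)\le\lambda_i$ only gives a factor $N$. The bound that would suffice is $x_i(t)\le p_i(t)\lambda_i$, since then $\sum_i(C_i x_i+F_i w_i)\le\sum_i p_i(C_i\lambda_i+F_i\mathds{1})<\gamma\mathds{1}$ because the weights $p_i$ sum to one; but this stronger invariance is not implied by the LP of (b): taking $v_i=p_i\lambda_i$ as a comparison function and using $\dot p_i=\sum_j\pi_{ji}p_j$, the defect $\dot v_i-A_i v_i-\sum_j\pi_{ji}v_j-E_i w_i$ is bounded below, after substituting the LP inequality, only by $\sum_j\pi_{ji}\bigl(p_j\lambda_i+(p_i-p_j)\lambda_j\bigr)$, which has no definite sign (the $j=i$ contribution $\pi_{ii}p_i\lambda_i$ is already nonpositive). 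Your proposed rescue --- a ``variance-type'' cancellation after aggregation by the $C_i$, checked only for $N=2$, plus an unspecified comparison/envelope argument for transients --- is exactly the step you defer, so the theorem's conclusion is asserted rather than proved. Moreover, the reduction to the stationary law $\pi^s$ is not a valid worst-case reduction: the moment input $\bar w(t)=\col_i(p_i(t)\E[w(t)])$ is modulated by the transient of $p(t)$, and for a slowly switching chain initialized in the highest-gain mode the supremum of $\|\E[z(t)]\|_\infty$ is attained during the relaxation of $p(t)$, not at stationarity. To complete the argument you would need either a correct invariance proof for the time-varying box $\{x_i\le p_i(t)\lambda_i\}$ (possibly under extra hypotheses), or to pass through the conditional-moment formulation of the cited reference, where the convex weights $p_i(t)$ enter the output aggregation directly.
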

This result is not tight in the sense that we only compute an upper-bound on the $L_\infty$-gain of the system \eqref{eq:mainsyst}.

\section{Design of interval observers}\label{sec:obs}

Let us consider now the following system
\begin{equation}\label{eq:systy}
  \begin{array}{rcl}
    \dot{x}(t)&=&A_{r_t}x(t)+A_{h,r_t}x^+(t-h)+E_{r_t}w(t),\ x(0)=x_0\\
    y(t)&=&C_{r_t} x(t)+C_{h,r_t}x^+(t-h)+F_{r_t} w(t)
  \end{array}
\end{equation}
where $x,x_0\in\mathbb{R}^n$, $w\in\mathbb{R}^p$, $y\in\mathbb{R}^r$ are the state of the system, the initial condition, the persistent disturbance input and the measured output. Note that this system is not necessarily positive. We are interested in finding an interval-observer of the form
\begin{equation}\label{eq:obs}
\begin{array}{lcl}
      \dot{x}^\bullet(t)&=&A_{r_t}x^\bullet(t)+A_{h,r_t}x^\bullet(t-h)+E_{r_t}w^\bullet(t)\\
&&\quad+L_{r_t}(y(t)-y^\bullet(t))\\
y^\bullet(t)&=&C_{r_t} x^\bullet(t)-C_{h,r_t}x^\bullet(t-h)-F_{r_t} w^\bullet(t)\\
      x^\bullet(0)&=&x_0^\bullet
\end{array}
\end{equation}
where $\bullet\in\{-,+\}$. Above, the observer with the superscript ``$+$'' is meant to estimate an upper-bound on the state value whereas the observer with the superscript ``-'' is meant to estimate a lower-bound, i.e. $x^-(t)\le x(t)\le x^+(t)$ for all $t\ge0$ provided that $x_0^-\le x_0\le x_0^+$. The signals $w^-,w^+\in L_\infty(\mathbb{R}_{\ge0},\mathbb{R}^p)$ are the lower- and the upper-bound on the disturbance $w(t)$ at any time, i.e. $w^-(t)\le w(t)\le w^+(t)$ for all $t\ge0$. We then accordingly define the following errors  $e^+(t):=x^+(t)-x(t)$ and $e^-(t):=x(t)-x^-(t)$ that are described by the model
\begin{equation}\label{eq:error}
\begin{array}{rcl}
    \dot{e}^\bullet(t)&=&(A_{r_t}-L^\bullet_{r_t}C_{r_t} )e^\bullet(t)\\
    &&+(A_{h,r_t}-L_{r_t}C_{h,r_t} )e^\bullet(t-h)\\
    &&+(E_{r_t}-L_{r_t}F_{r_t} )\delta^\bullet(t)\\
    \zeta^\bullet(t)&=&M_{r_t}e^\bullet
\end{array}
\end{equation}
where $\bullet\in\{-,+\}$, $\delta^+(t):=w^+(t)-w(t)\in\mathbb{R}_{\ge0}^p$ and $\delta^-(t):=w(t)-w^-(t)\in\mathbb{R}_{\ge0}^p$. The matrix $M_{r_t}\in\mathbb{R}^{q\times n}_{\ge0}$ is a nonzero matrix driving the errors $e^\bullet$ to the observed outputs $\zeta^\bullet$. It is assumed to be chosen a priori.

\subsection{A class of $L_1$-to-$L_1$  interval observers}

With all the previous elements in mind, we can state the observation problem that is considered in this section:
\begin{problem}\label{problemL1}
  Find an interval observer of the form \eqref{eq:obs} such that
  \begin{enumerate}[(a)]
    \item The linear systems in \eqref{eq:error} are positive, i.e. $A_i-L_iC_i$ is Metzler and $E_i-L_i F_i $ and $A_{h,i}-L_i C_{h,i}$ are nonnegative for all $i=1,\ldots,N$;
    \item The linear systems in \eqref{eq:error} are stochastically stable in the $L_1$-sense; %i.e. $A_{r_t}-L_{r_t}C_{r_t} $ is Hurwitz stable;
    \item The $L_1$-gain of the transfers $\delta^\bullet\to\zeta^\bullet$, $\bullet\in\{-,+\}$, are minimum.
  \end{enumerate}
\end{problem}

We then have the following result that provides a necessary and sufficient conditions for the existence of a solution to Problem \ref{problemL1}:
\begin{theorem}\label{th:L1obs}
The following statements are equivalent:
\begin{enumerate}[(a)]
  \item There exists an optimal $L_1$-to-$L_1$ interval-observer of the form \eqref{eq:obs} for the system \eqref{eq:systy} that solves Problem \ref{problemL1}.
  %
  %\item There exist matrices $L_i^\bullet\in\mathbb{R}^{n\times r}$, $i=1,\ldots,N$, $\bullet\in\{-,+\}$, such that
%  \begin{enumerate}
%  \item The matrices $A_i-L_i^\bullet C_i$, $i=1,\ldots,N$, $\bullet\in\{-,+\}$, are Metzler and Hurwitz stable;
%  \item The matrices $A_{h,i}-L_i^\bullet C_{h,i}$, $i=1,\ldots,N$, $\bullet\in\{-,+\}$, are nonnegative;
%  \item The matrices $E_i-L_i^\bullet F_i$, $i=1,\ldots,N$, $\bullet\in\{-,+\}$, are nonnegative;
%  \item The $L_1$-gains of the transfers $\delta^-\to\zeta^-$ and $\delta^+\to\zeta^+$ are simultaneously minimum.
%  \end{enumerate}
  \item There exist diagonal matrices $X_i\in\mathbb{R}^{n\times n}$, matrices $U_i\in\mathbb{R}^{n\times r_i}$, $i=1,\ldots,N$, and scalars $\gamma,\alpha$ such that the linear optimization problem
{   \begin{equation}
        \min_{X_1,\ldots,X_N,U_1,\ldots,U_N,\alpha,\gamma} \gamma
      \end{equation}
      subject to the constraints $\alpha,\gamma>0$, $\bar X\mathds{1}>0$,
\begin{equation}\label{eq:L1:cond1}
  \begin{array}{ccc}
\bar X\bar A-\bar U\bar C+\alpha\ge0, \bar X\bar A_h-\bar U\bar C_h\ge0,\bar X\bar E-\bar U\bar F\ge0
%X^-A-U^-C+\alpha\ge0  ,&&  X^+E-U^+F\ge0
  \end{array}
\end{equation}
and
  \begin{equation}\label{eq:L1:cond2}
  \begin{bmatrix}
    \mathds{1}\\
    1
  \end{bmatrix}^T\begin{bmatrix}
   (\bar X\bar A-\bar U\bar C)+(\bar X\bar A_h-\bar U\bar C_h) &&    \bar X\bar E-\bar U\bar F\\
    \mathds{1}^T(I_N\otimes M) && -\gamma \mathds{1}^T
  \end{bmatrix}<0
\end{equation}}
where $\bar X:=\diag_i(X_I)$ and $\bar U:=\diag_i(U_I)$ is feasible.

Moreover, in such a case, if we define $(\bar X,\bar U^*,\alpha^*,\gamma^*)$  as the global minimizer of the above minimization problem, then the optimal gains $L_i^{*}$ are given by $L_i^{*}=(X_i^*)^{-1}U_i^*$.
\end{enumerate}
\end{theorem}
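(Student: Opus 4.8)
The plan is to recognise the error dynamics \eqref{eq:error} as a positive delayed MJLS of the form \eqref{eq:mainsyst}, and then to apply the $L_1$ characterisation of Theorem \ref{th:L1} together with a lifting change of variables that linearises the resulting synthesis inequalities. Concretely, \eqref{eq:error} is obtained from \eqref{eq:mainsyst} by the substitutions $A_i\mapsto A_i-L_iC_i$, $A_{h,i}\mapsto A_{h,i}-L_iC_{h,i}$, $E_i\mapsto E_i-L_iF_i$, $C_i\mapsto M_i$, and $C_{h,i},F_i\mapsto 0$. With this identification, item (a) of Problem \ref{problemL1} is, by the Proposition, exactly the requirement that $A_i-L_iC_i$ be Metzler and that $A_{h,i}-L_iC_{h,i}$, $E_i-L_iF_i$ be nonnegative, while items (b)--(c) are, by Theorem \ref{th:L1}, the solvability with minimal $\gamma$ of the substituted inequalities \eqref{eq:L1cond1} in the unknowns $\lambda_i>0$. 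Hence statement (a) of the theorem holds if and only if there exist gains $L_i$ and vectors $\lambda_i>0$ satisfying these positivity and $L_1$ inequalities with $\gamma$ minimal.

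First I would introduce the change of variables $X_i:=\diag(\lambda_i)$ and $U_i:=X_iL_i$. Since $\lambda_i>0$, each $X_i$ is a positive diagonal matrix, so $\bar X\mathds{1}=\col_i(\lambda_i)=:\bar\lambda>0$, and the map $(\lambda_i,L_i)\mapsto(X_i,U_i)$ is a bijection onto the set of positive-diagonal $X_i$ with arbitrary $U_i$, with inverse $L_i=X_i^{-1}U_i$; this is what will ultimately yield the gain recovery formula. The key algebraic observation is that, writing $\bar L:=\diag_i(L_i)$, one has $\bar\lambda=\bar X\mathds{1}$ and $\bar X\bar L=\diag_i(X_iL_i)=\bar U$, so that every bilinear product $L_i^T\lambda_i$ is turned into a linear expression in $(X_i,U_i)$.

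Next I would transcribe the two families of conditions. For positivity, since left multiplication by a positive diagonal matrix preserves the entrywise sign pattern, $A_i-L_iC_i$ is Metzler iff $X_iA_i-U_iC_i$ is Metzler, and $A_{h,i}-L_iC_{h,i}\ge0$, $E_i-L_iF_i\ge0$ iff $X_iA_{h,i}-U_iC_{h,i}\ge0$, $X_iE_i-U_iF_i\ge0$. Assembling these over $i$ using the block structure of $\bar A,\bar A_h,\bar C,\bar C_h,\bar E,\bar F$ reproduces \eqref{eq:L1:cond1}: the Kronecker coupling $\Pi^T\otimes I_n$ inside $\bar A$ contributes only nonnegative off-diagonal blocks $\pi_{ji}X_i$ and a nonpositive diagonal shift, so the Metzler requirement on $\bar X\bar A-\bar U\bar C$ is equivalent to the blockwise one and is encoded by the slack $\alpha$, while the factor $P(h)^T\otimes I_n\ge0$ in $\bar A_h,\bar C_h$ preserves the nonnegativity conditions. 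For performance, I would stack the substituted inequalities \eqref{eq:L1cond1}, recognising that $\bar A^T\bar\lambda=\col_i(A_i^T\lambda_i+\sum_j\pi_{ij}\lambda_j)$ and $\bar A_h^T\bar\lambda=\col_i(\sum_jp_{ij}(h)A_{h,j}^T\lambda_j)$, so that the stacked conditions read $(\bar A+\bar A_h-\bar L(\bar C+\bar C_h))^T\bar\lambda+(I_N\otimes M)^T\mathds{1}<0$ and $(\bar E-\bar L\bar F)^T\bar\lambda-\gamma\mathds{1}<0$. Transposing, substituting $\bar\lambda^T=\mathds{1}^T\bar X$ and $\bar X\bar L=\bar U$, and collecting both rows into a single block row vector premultiplied by $[\mathds{1}^T\ 1]$ then gives precisely \eqref{eq:L1:cond2}.

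Finally, because the change of variables is a bijection preserving the objective $\gamma$, the feasible value sets of the two optimisation problems coincide, so an optimal interval observer solving Problem \ref{problemL1} exists if and only if the linear program in (b) is feasible, and any global minimiser yields admissible gains through $L_i^{*}=(X_i^{*})^{-1}U_i^{*}$. The main obstacle I anticipate is the faithful, loss-free translation of the bilinear $L_1$ certificate: one must justify that restricting the certificate to a positive diagonal $X_i$ (equivalently, to the vector $\lambda_i$) introduces no conservatism, which is exactly the structural feature of positive systems, and that the Metzler encoding via $\alpha$ together with the off-block Kronecker terms $\Pi^T\otimes I_n$ does not spuriously couple the blockwise positivity constraints.
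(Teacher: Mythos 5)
Your proposal is correct and follows essentially the same route as the paper, whose proof is the one-line remark that everything follows from algebraic manipulations under the change of variables $\lambda_i=X_i\mathds{1}$ (equivalently your $X_i=\diag(\lambda_i)$, $U_i=X_iL_i$) applied to the $L_1$ certificate of Theorem~\ref{th:L1} for the substituted error dynamics. Your write-up merely fills in the details the paper omits — the blockwise translation of the Metzler/nonnegativity constraints, the stacking identities for $\bar A^T\bar\lambda$ and $\bar A_h^T\bar\lambda$, and the bijectivity of the change of variables — and does so correctly.
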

\begin{proof}
  The proof follows from algebraic manipulations using the change of variables $\lambda_i=X_i\mathds{1}$.
\end{proof}
%As the output matrix $M$ weighting the observer error is independent on whether the observer estimates the upper or the lower bound, then it is not restrictive to restrict to finding a single gain matrix for each observer; i.e. $L_i^+=L_i^-=L_i$. In the case where those matrices $M$ would differ, then different gains will need to be considered. This extension is straightforward and is not detailed here.

\subsection{A class of $L_\infty$-to-$L_\infty$  interval observers}

The following observation problem will be considered in this section:
\begin{problem}\label{problemLinf}
  Find an interval observer of the form \eqref{eq:obs} such that
  \begin{enumerate}[(a)]
    \item The linear systems in \eqref{eq:error} are positive, i.e. $A_i-L_iC_i$ is Metzler and $E_i-L_iF_i $ is nonnegative for all $i=1,\ldots,N$;
    \item The linear systems in \eqref{eq:error} are stochastically stable in the $L_\infty$-sense; %i.e. $A_{r_t}-L_{r_t}C_{r_t} $ is Hurwitz stable;
    \item The $L_\infty$-gain of the transfers $\delta^\bullet\to\zeta^\bullet$, $\bullet\in\{-,+\}$, is smaller than a certain level $\gamma$ that can be minimized.
  \end{enumerate}
\end{problem}

\begin{theorem}\label{th:Linfobs}
The following statements are equivalent:
\begin{enumerate}[(a)]
  \item There exists a $L_\infty$-to-$L_\infty$ interval-observer of the form \eqref{eq:obs} for the system \eqref{eq:systy} that solves Problem \ref{problemLinf}.
%  %
%  \item There exist matrices $L_i\in\mathbb{R}^{n\times r}$, $i=1,\ldots,N$, such that
%  \begin{enumerate}
%  \item The matrices $A_i-L_iC_i$, $i=1,\ldots,N$, are Metzler and Hurwitz stable;
%  \item The matrix $E-LF $ nonnegative;
%  \item The $L_\infty$-gains of the transfers $\delta^-\to\zeta^-$ and $\delta^+\to\zeta^+$ are simultaneously minimum.
%  \end{enumerate}
  \item There exist diagonal matrices $X_i\in\mathbb{R}^{n\times n}$, matrices $U_i\in\mathbb{R}^{n\times r_i}$, $i=1,\ldots,N$, and scalars $\gamma,\alpha$ such that the linear optimization problem
{   \begin{equation}
        \min_{X_1,\ldots,X_N,U_1,\ldots,U_N,\alpha,\gamma} \gamma
      \end{equation}
      subject to the constraints $\alpha,\gamma>0$, $\bar X\mathds{1}>0$,
\begin{equation}\label{eq:L1:cond1}
  \begin{array}{ccc}
\bar X\bar A-\bar U\bar C+\alpha\ge0, \bar X\bar E-\bar U\bar F\ge0
%X^-A-U^-C+\alpha\ge0  ,&&  X^+E-U^+F\ge0
  \end{array}
\end{equation}
and
  \begin{equation}\label{eq:L1:cond2}
  \begin{bmatrix}
    \mathds{1}\\
    1
  \end{bmatrix}^T\begin{bmatrix}
   (\bar X\bar A-\bar U\bar C) &&    (\bar X\bar E-\bar U\bar F)\mathds{1}\\
    \mathds{1}^T && -\gamma \mathds{1}^T
  \end{bmatrix}<0
\end{equation}}
where $\bar X:=\diag_i(X_I)$ and $\bar U:=\diag_i(U_I)$ is feasible.

Moreover, in such a case, if we define $(\bar X,\bar U^*,\alpha^*,\gamma^*)$ as the global minimizer of the above minimization problem, then the optimal gains $L_i^*$ are given by $L_i^*=(X_i^*)^{-1}U_i^*$ and is uniformly optimal over all the possible values for $M$ (i.e. it is independent of the values of $M$).
\end{enumerate}
\end{theorem}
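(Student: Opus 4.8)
The plan is to mirror the argument behind Theorem \ref{th:L1obs}, applying the $L_\infty$ performance characterization of Theorem \ref{th:Linf} to the error dynamics \eqref{eq:error} rather than reasoning about the observer directly. First I would note that, since $\delta^+$ and $\delta^-$ are both nonnegative and the two error systems in \eqref{eq:error} share the same state, input and output matrices $(A_i-L_iC_i,\,E_i-L_iF_i,\,M_i)$, it suffices to treat a single representative system; the designed gain then serves both the upper and the lower observer. Under the positivity requirement of Problem \ref{problemLinf}(a) this representative system is a (non-delayed, as required by Theorem \ref{th:Linf}) positive MJLS, so its stochastic $L_\infty$-gain from $\delta^\bullet$ to $\zeta^\bullet$ is bounded by $\gamma$ exactly when the linear program of Theorem \ref{th:Linf} is feasible under the substitutions $A_i\mapsto A_i-L_iC_i$, $E_i\mapsto E_i-L_iF_i$, $C_i\mapsto M_i$ and $F_i\mapsto 0$. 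This already establishes the content of parts (b)--(c) of the problem in terms of the raw variables $\lambda_i$ and $L_i$; what remains is to turn these conditions into a genuine linear program in jointly searchable variables.

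The heart of the proof is the change of variables $\lambda_i=X_i\mathds{1}$ with $X_i$ diagonal and positive, together with $U_i=X_iL_i$ (so that $L_i=X_i^{-1}U_i$, well defined because $\bar X\mathds{1}>0$ forces each diagonal $X_i$ to be invertible). The point of taking $X_i$ diagonal is that $\mathds{1}^TX_i=\lambda_i^T$, which lets every occurrence of the bilinear product $\lambda_i^TL_iC_i$ collapse to $\mathds{1}^TU_iC_i$, i.e. to the linear term carried by $\bar U\bar C$; the companion products with $F_i$ collapse to $\bar U\bar F$. I would first translate the positivity constraints: scaling $A_i-L_iC_i$ and $E_i-L_iF_i$ on the left by the positive diagonal $X_i$ preserves the Metzler and nonnegativity patterns, so ``$A_i-L_iC_i$ Metzler, $E_i-L_iF_i\ge0$'' becomes the nonnegativity block $\bar X\bar A-\bar U\bar C+\alpha\ge0$, $\bar X\bar E-\bar U\bar F\ge0$, where the scalar slack $\alpha$ absorbs the diagonal shift needed to encode the off-diagonal (Metzler) condition as an entrywise inequality. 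I would then substitute the same change of variables into the performance inequalities of Theorem \ref{th:Linf}, using $\mathds{1}^T\bar X(\Pi^T\otimes I_n)=\bar\lambda^T(\Pi^T\otimes I_n)$ to reproduce the coupling through $\Pi$, arriving at the block inequality displayed in statement (b). The converse is immediate: given a feasible $(\bar X,\bar U,\alpha,\gamma)$ one sets $L_i=X_i^{-1}U_i$, $\lambda_i=X_i\mathds{1}$ and reads the argument backwards, so the reparametrization is a bijection between the two feasible sets and the two optimal values of $\gamma$ coincide.

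The last point to settle, and the one I expect to be the main obstacle, is the claim that the optimal gain is \emph{uniformly} optimal over all admissible $M$. The mechanism is that $M$ does not enter the synthesis block at all: the output matrix appears in the $L_\infty$ characterization only through a decoupled performance row, so once the error dynamics are fixed, the quantity actually minimized is an $M$-free bound on the steady-state gain of $\delta^\bullet\mapsto e^\bullet$. Because the error system is positive, its state responds monotonically to the gain choice, so the gain minimizing this error-level bound simultaneously minimizes $\|M\cdot(\text{DC gain})\|_\infty$ for \emph{every} nonnegative $M$; the role of $M$ is confined to fixing the resulting value of $\gamma$ through the performance row, not the minimizer $L_i^\ast$. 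Making this monotonicity/decoupling argument precise--rather than the routine algebra of the change of variables--is where the real care is needed, together with checking that the non-transposed conditions of Theorem \ref{th:Linf} are legitimately recast in the transposed, $\mathds{1}^T$-from-the-left form of statement (b), which for positive systems is justified by the fact that the $L_\infty$-gain equals the $L_1$-gain of the transposed realization.
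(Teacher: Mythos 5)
The core of your argument does not go through. You claim that the LP in statement (b) is obtained by applying Theorem \ref{th:Linf} to the error system (with $A_i\mapsto A_i-L_iC_i$, etc.) and then performing the change of variables $\lambda_i=X_i\mathds{1}$, $U_i=X_iL_i$, so that the two feasible sets are in bijection and the optimal values of $\gamma$ coincide. But the conditions of Theorem \ref{th:Linf} are of the non-transposed form $(A_i-L_iC_i)\lambda_i+\sum_j\pi_{ji}\lambda_j+(E_i-L_iF_i)\mathds{1}<0$, whose bilinear term is $L_iC_iX_i\mathds{1}$: here $X_i$ sits to the \emph{right} of $L_iC_i$, so it cannot be absorbed into $U_i=X_iL_i$. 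The identity $\mathds{1}^TX_i=\lambda_i^T$ only linearizes terms of the form $\lambda_i^TL_iC_i$, i.e. $L_1$-type (left-multiplied) conditions such as those of Theorem \ref{th:L1obs}. Your proposed fix --- pass to the transposed realization, whose $L_1$-gain equals the $L_\infty$-gain --- does not repair this: transposing gives the state matrix $A_i^T-C_i^TL_i^T$ and the output matrix $(E_i-L_iF_i)^T$, so the gain now appears on the right of the state matrix and inside the output matrix, and the bilinearity persists. This non-convexifiability is precisely why the paper announces its $L_\infty$ results as sufficient only.

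Correspondingly, the inequality in (b) is not the exact $L_\infty$ condition in disguise. Unpacking it with $\bar\lambda=\bar X\mathds{1}$ gives $\bar\lambda^T(\bar A-\bar L\bar C)+\mathds{1}^T<0$ and $\bar\lambda^T(\bar E-\bar L\bar F)\mathds{1}<\gamma$, which is an $L_1$-type certificate on the error state itself (identity output, summed input columns); it upper-bounds the $L_\infty$-gain of $\delta^\bullet\mapsto\zeta^\bullet$ for every nonnegative $M$ (e.g. via $\|MH\|_\infty\le\|M\|_\infty\,\mathds{1}^TH\mathds{1}$ with $H=(-\bar A_{cl})^{-1}\bar E_{cl}\ge0$), but it is not equivalent to it. This is the actual mechanism behind the absence of $M$ from the LP and the ``uniform optimality over $M$'' claim --- which you correctly sense --- but your stronger assertion that the resulting gain minimizes $\|M\cdot(\text{DC gain})\|_\infty$ for \emph{every} $M$ does not follow from minimizing an $M$-free upper bound. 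The paper's (omitted) proof follows the peak-to-peak construction of \cite{Briat:15g}, which works directly with such an $L_1$-type bound on the error state rather than with Theorem \ref{th:Linf}; your plan, as written, either gets stuck at the bilinearity or proves a different (and false) equivalence.
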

\begin{proof}
  The proof follows from the same procedure as in \cite{Briat:15g}. As it is quite long, it is omitted here.
\end{proof}

\section{Examples}\label{sec:ex}

\subsection{A system without delay}

Let us consider the system \eqref{eq:systy} with the matrices
\begin{equation}\label{eq:ex1}
\begin{array}{r}
  A_1=\begin{bmatrix}
    -1 & 0\\1 0.1
  \end{bmatrix}, A_2=\begin{bmatrix}
0.1 & 1\\0 && -2
  \end{bmatrix}, E_1=\begin{bmatrix}
1 & 0.5\\0& 0.5
  \end{bmatrix},\\
   E_2=\begin{bmatrix}
0&0.5\\1&0.5
  \end{bmatrix},
  C_1=C_2=\begin{bmatrix}
   1 & 1
  \end{bmatrix},F_1=F_2=\begin{bmatrix}
   0 & 0
  \end{bmatrix}.
\end{array}
\end{equation}
We also pick $A_{h,1}=A_{h,2}=0$, $C_{h,1}=C_{h,2}=0$ and
\begin{equation}
  \Pi=\begin{bmatrix}
    -2&2\\2&-2
  \end{bmatrix}.
\end{equation}
Solving for the conditions of Theorem \ref{th:L1obs}, we get the gains
\begin{equation}\label{eq:Lex1}
  L_{1} = \begin{bmatrix}
    0\\1
  \end{bmatrix}\textnormal{ and }L_{2} = \begin{bmatrix}
    1\\0
  \end{bmatrix}.
\end{equation}
The inputs are given by $w_1(t)=\sin(t)$, $w_2(t)=\sin(t+\pi/2)$, $w_1^+(t)=w_2^+(t)=1$, $w_1^-(t)=w_2^-(t)=-1$. The $L_1$-gain of the transfer $\delta^\bullet\mapsto \zeta^\bullet$ is equal to 1.0426. Solving now for the conditions in Theorem \ref{th:Linfobs}, we obtain the same gains together with the value 1.1383 as an upper-bound on the $L_\infty$-gain of the transfer $\delta^\bullet\mapsto \zeta^\bullet$. The trajectories of the system and the observers are depicted in Fig.~\ref{fig:L1nondel}.

\begin{figure}[h]
\centering
  \includegraphics[width=0.7\textwidth]{./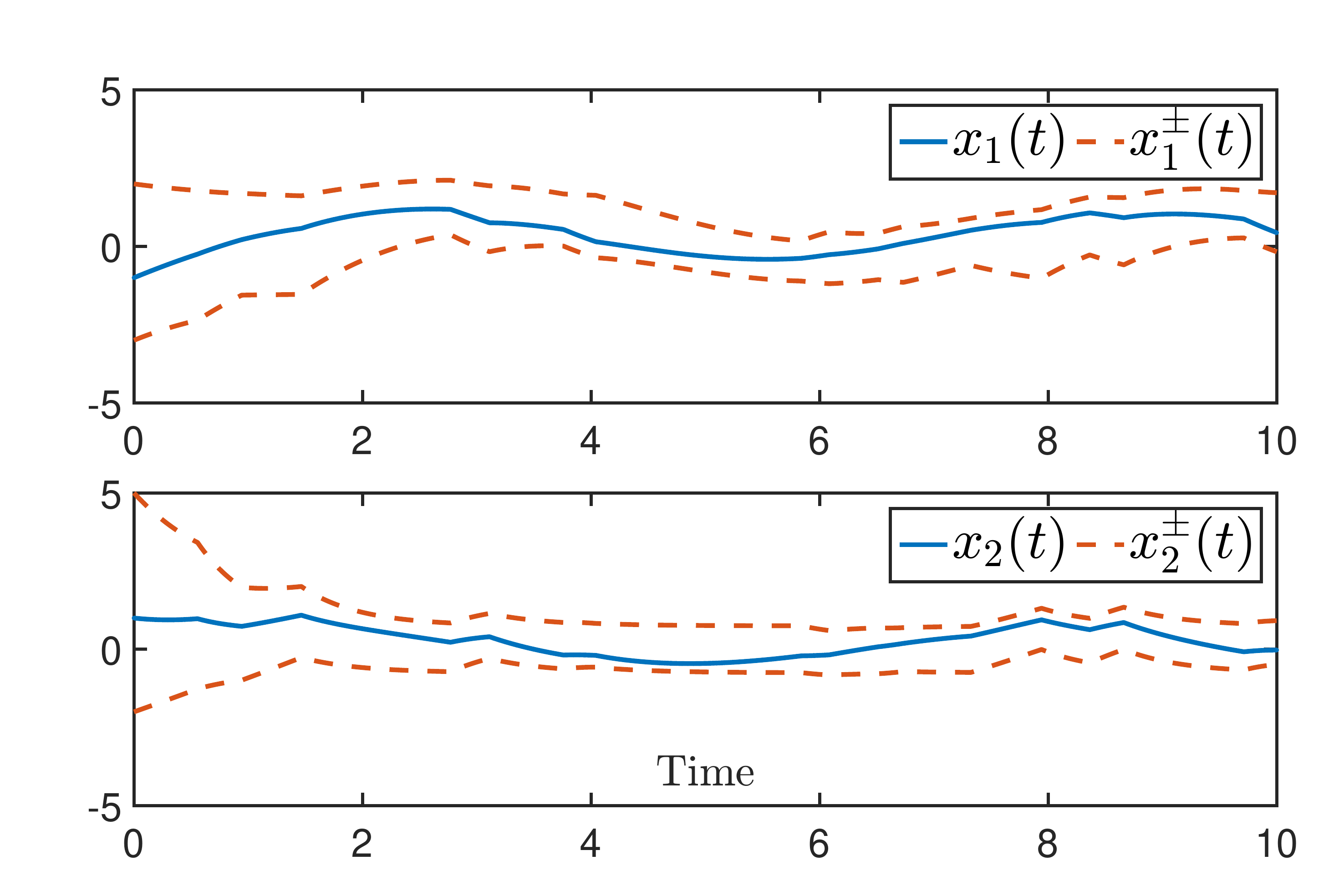}%[width=0.54\textwidth]{./Matlab/L1_observer_nondelayed.pdf}
  \caption{Trajectories of the system \eqref{eq:systy}-\eqref{eq:ex1} and the observer \eqref{eq:obs} with the gains \eqref{eq:Lex1}.}\label{fig:L1nondel}
\end{figure}

\subsection{A system with delay}

Let us consider the system \eqref{eq:systy} with the matrices
\begin{equation}\label{eq:ex2}
\begin{array}{r}
  A_1 = \begin{bmatrix}
    -7.364 &1.065 &1.255\\
1.809 &-9.3& 0\\
0.555 &0 &-7.086
  \end{bmatrix},  A_{h,1} = \begin{bmatrix}
  1.5&3&1.5\\
    2.7 &3 &6.45\\
    0 &1.5 &3
  \end{bmatrix},\\
  A_2 = \begin{bmatrix}
    -7.469 &1.126 &1.3\\
1.851 & -9.222 &0\\
0.6180 &0& -7.171\\
  \end{bmatrix},  A_{h,2} = \begin{bmatrix}
1.8 &3.44 &2.25\\
3 &3.45 &6.75\\
0 &0 &3.6
  \end{bmatrix},\\
   E_1 = \begin{bmatrix}
    1&     0\\
     0&     1\\
     0&     1
  \end{bmatrix},  E_{2} = \begin{bmatrix}
     1&     0\\
     1&     1\\
     1&     0
  \end{bmatrix},C_1=C_2=\begin{bmatrix}
    1 & 0 & 0\\
    0 & 1 & 0
  \end{bmatrix}
\end{array}
\end{equation}
together with $C_{h,1}=C_{h,2}=0$, $F_1=F_2=0$ and
\begin{equation}
  \Pi = \begin{bmatrix}
    -1.5 & 1.5\\
    0.3 & -0.3
  \end{bmatrix}.
\end{equation}
We consider the same inputs as in the other example. We now use Theorem \ref{th:L1obs} to which we add the constraint that the coefficients of the observer gains must not exceed $20$ in absolute value. We get the gains
\begin{equation}\label{eq:Lex2}
  L_{1} = \begin{bmatrix}
    20&    1.0650\\
    1.8090&   20\\
    0.5550 & 0
  \end{bmatrix}\textnormal{ and }L_{2} = \begin{bmatrix}
    1.8&    1.126\\
    1.851&    3.45\\
        0 & 0
  \end{bmatrix}
\end{equation}
which yields the minimum $L_1$-gain 0.88892.  The trajectories of the system and the observers are depicted in Fig.~\ref{fig:L1del}.
\begin{figure}[h]
\centering
  \includegraphics[width=0.7\textwidth]{./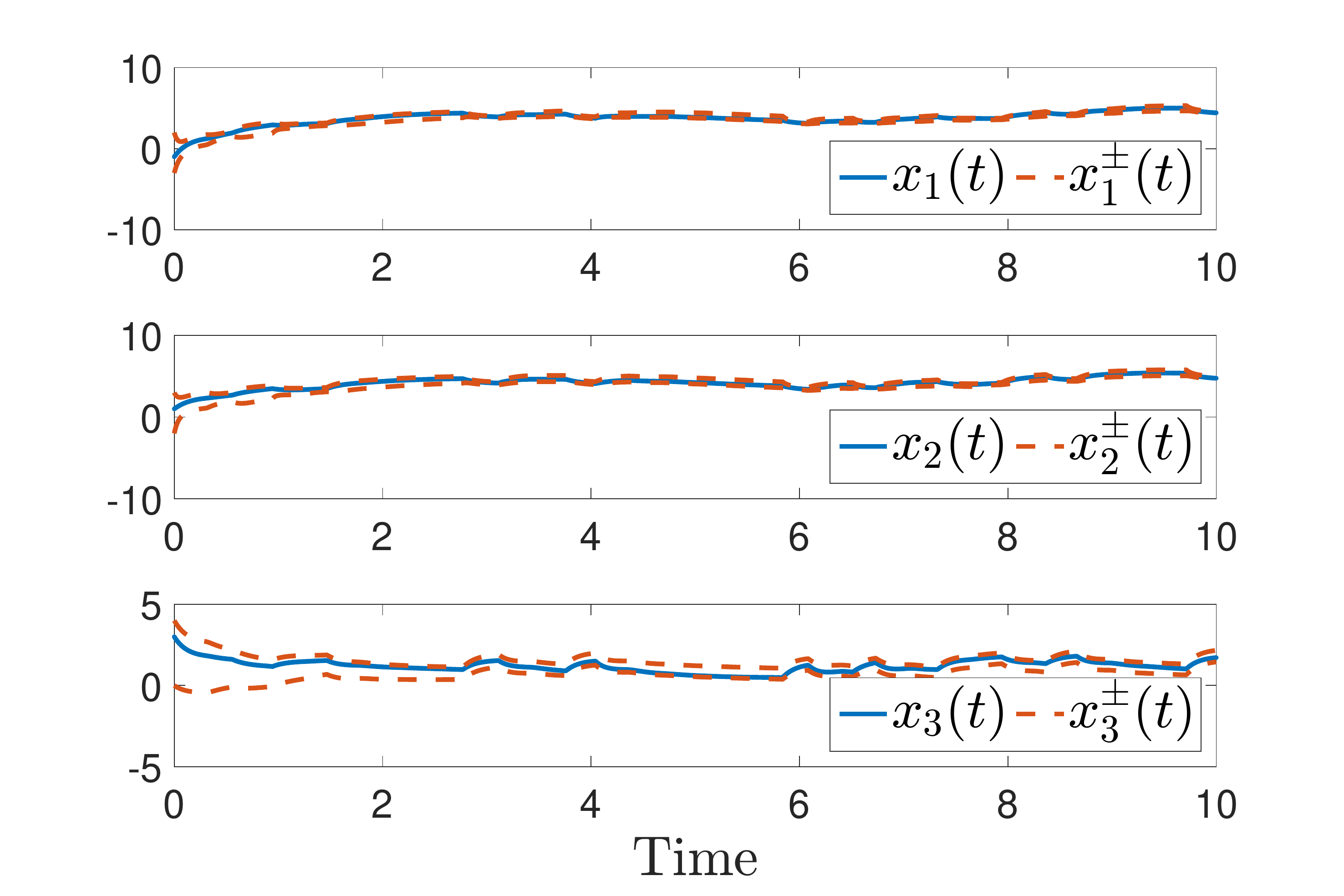}%[width=0.54\textwidth]{./Matlab/L1_observer_delayed.pdf}
  \caption{Trajectories of the system \eqref{eq:systy}-\eqref{eq:ex2} and the observer \eqref{eq:obs} with the gains \eqref{eq:Lex2}.}\label{fig:L1del}
\end{figure}

%\section{Conclusion and Future Works}
%
%Linear programming conditions have been obtained for the design of interval observers for MJLS. Examples have illustrated the approach. An interesting extension would be the development of a result for MJLS with delays in the $L_\infty$-framework which does not seem to exist.

\bibliographystyle{plain}
%\bibliography{../../../../Lastbib/global,../../../../Lastbib/briat}

\end{document}